\documentclass[12pt,letterpaper]{amsart}
\usepackage{latexsym,amssymb,amsmath,amsthm,mathtools,bm}
\usepackage{seqsplit}
\usepackage{url}

\def\C{\mathbb C}
\def\R{\mathbb R}

\def\E{\mathbb E}

\def\HH{{\bf H}}

\newtheorem{theorem}{Theorem}[section]

\newtheorem{proposition}[theorem]{Proposition}

\theoremstyle{definition}

\newtheorem{claim}{Claim}

\makeatletter
    
    \@addtoreset{equation}{section}
  \makeatother

\begin{document}

\title
[Biharmonic rotational surfaces in $\E^4$]
{Biharmonic rotational surfaces in the four-dimensional Euclidean space are minimal}

\author{Shun Maeta}
\address{Department of Mathematics, Chiba University, 1-33, Yayoicho, Inage, Chiba, 263-8522, Japan.}
\curraddr{}
\email{shun.maeta@faculty.gs.chiba-u.jp~{\em or}~shun.maeta@gmail.com}
\subjclass[2020]{53C42, 53A07, 53B25}

\date{}

\dedicatory{}

\keywords{biharmonic submanifolds, Chen's conjecture, simple rotational surfaces, four-dimensional Euclidean space}

\commby{}

\begin{abstract}
In this paper, we show that any biharmonic simple rotational surface in the four-dimensional Euclidean space is minimal. The proof is based on reducing the biharmonic equation to a system of ordinary differential equations for the profile curve and then excluding all possible non-minimal branches. This is a partial affirmative answer to Chen's conjecture. 
\end{abstract}

\maketitle


\section{Introduction}\label{intro}

In 1988, Bang-Yen Chen proposed a conjecture on biharmonic submanifolds in a Euclidean space, that is, {\bf any biharmonic submanifold $(\Delta {\bf H}=0)$ in a Euclidean space is minimal (${\bf H}=0$)} (cf. \cite{C}). 
The study of Chen's conjecture has focused mainly on the case of hypersurfaces.
Chen and Jiang independently proved that biharmonic surfaces in $\E^3$ are minimal \cite{C,J2}. 
Hasanis and Vlachos \cite{HV} and Defever \cite{D} proved that biharmonic hypersurfaces in $\E^4$ are minimal. 
Before the recent breakthrough of Fu, Hong, and Zhan, several affirmative results had also been obtained under additional geometric assumptions. 
For instance, Luo proved the conjecture for weakly convex hypersurfaces \cite{L}. 
Fu and Hong proved the conjecture for hypersurfaces with constant scalar curvature
and at most six distinct principal curvatures \cite{FH}.
Montaldo, Oniciuc, and Ratto verified the conjecture for cohomogeneity one hypersurfaces \cite{MOR}. 
Deepika and Arvanitoyeorgos proved it for $\delta(r)$-ideal hypersurfaces \cite{DA}.

Recently, significant progress has been made on Chen's conjecture for hypersurfaces.
Fu, Hong, and Zhan proved that every biharmonic hypersurface in $\E^5$ is minimal \cite{FZ1}. 
They subsequently proved Chen's conjecture for biharmonic hypersurfaces in $\E^6$ \cite{FZ2}. 
In addition to these elliptic approaches, Fu, Hong, and Tian have recently initiated a flow-theoretic approach to biharmonic hypersurfaces by studying the biharmonic hypersurface flow in higher dimensions \cite{FHT}.

On the other hand, much less is known in higher codimension. 
Indeed, even the case of surfaces in $\E^4$ has not yet been completely solved. 
For proper immersions, Akutagawa and the author proved that every biharmonic properly immersed submanifold in a Euclidean space is minimal \cite{AM}.
Thus, under the properness assumption, Chen's conjecture is already known in all codimensions.
Consequently, in order to obtain a result which is not covered by this global theorem, it is important to establish a local non-existence result without assuming properness of the immersion.
We remark that, for an isometric immersion into Euclidean space, properness of
the immersion implies completeness with respect to the induced metric.
Hence, it is important to study biharmonic surfaces in $\E^4$ from a local point of view.
Some partial results are known under additional assumptions on the mean curvature vector. 
The author and Urakawa gave an affirmative answer to Chen's conjecture for Lagrangian surfaces with parallel normalized mean curvature vector in $\C^2=\E^4$.
Ye\u{g}in \c{S}en and Turgay proved that there is no proper biharmonic surface with parallel normalized mean curvature vector in $\E^4$ \cite{ST}.
This result was later generalized by Chen, who proved that biharmonic surfaces with parallel normalized mean curvature vector do not exist in any Euclidean space \cite{ChenPNMCV}.

It is also useful to compare the biharmonic condition with the biconservative condition.
In the Euclidean setting, the biconservative condition is given by
$(\Delta \HH)^\top=0$, that is, by the vanishing of the tangential part
of the biharmonic equation. Hence it is weaker than the biharmonic condition.
Non-constant mean curvature biconservative surfaces in $\E^3$ are known, and complete examples have also been constructed \cite{CMOP,N}.
The same work of Ye\u{g}in \c{S}en and Turgay also shows that biconservative surfaces in $\E^4$ with parallel normalized mean curvature vector are certain rotational surfaces \cite{ST}.
Thus rotational surfaces arise naturally under the weaker biconservative condition.

Rotational symmetry plays an important role in geometric analysis. 
For example, rotationally symmetric models naturally appear in the study of self-similar solutions to geometric flows, such as Ricci solitons and Yamabe solitons \cite{Br,DS}. 
The harmonic map heat flow introduced by Eells and Sampson has played a fundamental role in the theory of harmonic maps \cite{ES}. 
As mentioned above, the study of biharmonic maps and submanifolds has also recently been connected with geometric flows through the biharmonic map flow. 
From this viewpoint, it is natural to investigate rotationally symmetric biharmonic submanifolds in higher codimension. 
Therefore, in this paper, we focus on simple rotational surfaces in $\E^4$ (cf. Section 48 in \cite{WM1916})
\[
X(s,\theta)=\bigl(x(s),y(s),r(s)\cos\theta,r(s)\sin\theta\bigr),
\]
where $r(s)>0$ and $s$ is the arc-length parameter of the profile curve
$\gamma(s)=\bigl(x(s),y(s),r(s)\bigr)\subset\E^3$.
In this paper, we prove that every biharmonic simple rotational surface in $\E^4$ is minimal, without imposing any additional assumptions.
\begin{theorem}\label{main}
Any biharmonic simple rotational surface in the four-dimensional Euclidean space is minimal.
\end{theorem}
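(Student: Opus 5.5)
We plan to reduce the biharmonic equation $\Delta\HH=0$ to a system of ODEs in the arc length $s$ of the profile curve, and then show that every non-minimal solution branch leads to a contradiction.

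\textbf{Frame and mean curvature.} Since $x'^2+y'^2+r'^2=1$, the induced metric is $ds^2+r^2d\theta^2$. Hence
\[
\Delta f=-\Bigl(f_{ss}+\frac{r'}{r}f_s+\frac{1}{r^2}f_{\theta\theta}\Bigr)
\]
on functions, with the sign convention chosen so that $\Delta X=-2\HH$. Because $\Delta X=-2\HH$, we compute $2\HH$ componentwise:
\[
2\HH=\Bigl(x''+\tfrac{r'}{r}x',\;y''+\tfrac{r'}{r}y',\;\bigl(r''+\tfrac{r'}{r}r'-\tfrac1r\bigr)\cos\theta,\;\bigl(r''+\tfrac{r'}{r}r'-\tfrac1r\bigr)\sin\theta\Bigr).
\]
Write $A=x''+\frac{r'}{r}x'$, $B=y''+\frac{r'}{r}y'$ and $C=r''+\frac{r'^2-1}{r}$. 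The biharmonic equation $\Delta^2X=0$ then becomes the ODE system
\[
L A=0,\qquad L B=0,\qquad LC-\frac{C}{r^2}=0,\qquad\text{where } L f=f''+\frac{r'}{r}f'.
\]
The first two equations integrate once to $(rA')'=0$, so $A'=a/r$ and $B'=b/r$ for constants $a,b$. We also have the identities $(rx')'=rA$ and $(ry')'=rB$. These, together with the arc-length constraint, let us express everything through $r$ and finitely many constants.

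\textbf{Case split.} We first exploit the rotations in the $(x,y)$-plane. Since $(A,B)$ satisfy the same linear equation, we may rotate the $(x,y)$-coordinates so as to simplify the constants, for example to make $b=0$ or to align the homogeneous solutions. The branches are then organized as follows.

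(i) $r'\equiv 0$ on an open set. Then $C=-1/r$ is a nonzero constant, but the third equation would force $C/r^2=0$. So this branch is impossible.

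(ii) $r'\neq 0$ on an open set. We reparametrize by $t=r$ and write $x,y$ as functions of $r$. Now each of $L$ and $L-1/r^2$ has an explicit solution basis:
\[
\ker L=\operatorname{span}\Bigl\{1,\int\frac{ds}{r}\Bigr\},
\]
while the kernel of $L-1/r^2$ is in general not explicit. Therefore we keep the third equation as a genuine ODE and substitute into it the arc-length relation $r'^2=1-x'^2-y'^2$.

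The aim is to derive a polynomial or algebraic relation between $r$, $r'$ and the constants. To get it, we differentiate the constraint and use $x''=A-\frac{r'}{r}x'$, and similarly for $y''$. Then we show that any non-minimal solution forces incompatible asymptotic or algebraic behaviour. In the sub-branch $A\equiv B\equiv 0$, the curve lies essentially as in $\E^3$ rotational data, namely the hypersurface-type situation in the 3-space spanned by one of $x,y$ and the rotation plane. There the known result (Chen, Jiang; and the biharmonic hypersurface results) should give $C=0$, hence minimality.

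\textbf{Main obstacle.} The hardest step is the generic branch where $A,B,C$ are all nonzero. There we must eliminate $x,y$ to obtain a single high-order ODE for $r(s)$ with parameters $a,b$ and further constants, and then show that its solutions are incompatible with $r'^2\le 1$ together with the remaining equations. We would first try the following. Use the first integrals to write
\[
x'=\frac{1}{r}\int rA\,ds,
\]
and similarly for $y'$, with $A$ an affine function of $\phi=\int ds/r$. Then change the variable to $\phi$, so that $L$ becomes $r^{-2}\partial_\phi^2$ and the third equation becomes $C_{\phi\phi}=C$, giving $C=c_1e^\phi+c_2e^{-\phi}$. With $u=\log r$, the definition of $C$ and the constraint become ODEs in $\phi$ with exponential and polynomial coefficients. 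Comparing growth rates, or the structure of the resulting exponential-polynomial identity, should force $c_1=c_2=0$ and $a=b=0$ together with vanishing of the affine constants, that is, $\HH=0$.
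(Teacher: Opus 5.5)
Your set-up is the same as the paper's. With $\phi=\int ds/r$ (the paper's $\tau$) you correctly get $A,B$ affine in $\phi$ and $C=c_1e^{\phi}+c_2e^{-\phi}$, and your exclusion of the branch $r'\equiv0$ is right. The sub-branch $A\equiv B\equiv0$ also works once you justify it. From $(rx')'=(ry')'=0$ the vector $r(x',y')$ is constant, so the surface lies in an affine hyperplane and Chen--Jiang applies; more directly, the compatibility relation below gives $Cr'\equiv0$.

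That is all the proposal actually proves. You do not treat three kinds of branch:
\begin{itemize}
\item $C\equiv0$ with $(A,B)\not\equiv0$;
\item the partially degenerate branches with $C\not\equiv0$, for instance $(A,B)$ a nonzero constant, or $(A,B)=(a\phi,0)$ after normalization;
\item the generic branch.
\end{itemize}
For the generic branch, the mechanisms you suggest cannot work. The theorem is local: the profile curve lives only on some interval of $\phi$, so there is no asymptotic behaviour whose growth rates could be compared. Nor is there an exponential-polynomial identity to analyse. Only $A,B,C$ are explicit. The function $r$ solves the nonlinear equation $r_{\phi\phi}=r+Cr^2$, which has no closed form when $C\not\equiv0$, and $x,y$ come from $r$ by double integration.

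The missing idea is to isolate what actually overdetermines the problem and to exploit it at a single point. For any choice of the constants, the system $x_{\phi\phi}=r^2A$, $y_{\phi\phi}=r^2B$, $r_{\phi\phi}=r+r^2C$ is locally solvable. The obstruction is that the arc-length condition $x_\phi^2+y_\phi^2+r_\phi^2=r^2$ must hold identically. Its derivative equals $2r^2(Ax_\phi+By_\phi+Cr_\phi)$, so this is equivalent to the condition holding at one point together with the compatibility equation $Ax_\phi+By_\phi+Cr_\phi\equiv0$, that is, $\HH\perp X_s$. You mention differentiating the constraint, but you never write this relation down or use it. It imposes infinitely many conditions on finitely many parameters.

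The paper turns this into a contradiction as follows.
\begin{itemize}
\item It normalizes $(A,B)=(\alpha\phi,\beta)$ with $\alpha\beta\neq0$ and expands everything at $\phi=0$, using real analyticity of the solutions.
\item It eliminates the Taylor coefficients of $x,y$ via the ODEs.
\item From finitely many orders of the compatibility and arc-length identities it obtains a polynomial system in $\alpha^2r(0)^2$, $\beta^2r(0)^2$, $r_\phi(0)/r(0)$, $C(0)r(0)$, $C_\phi(0)r(0)$.
\item A Gr\"obner basis computation shows that $pt^4$ lies in the ideal, where $p=r_\phi(0)/r(0)$ and $t=C(0)r(0)$. The cases $p=0$ and $t=0$ are then excluded by elementary sign arguments.
\end{itemize}

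The degenerate branches need their own, shorter, arguments that likewise exploit the compatibility equation. For example, when $C\equiv0$ and the coefficient vectors of $A,B$ are not parallel, compatibility forces $(x_\phi,y_\phi)$ to be a multiple of $(-B,A)$. This makes $r^2$ proportional to $(A^2+B^2)^{-3/2}$, which contradicts $r_{\phi\phi}=r$. Without arguments of this type, the proposal does not prove the theorem.
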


This provides a partial affirmative answer to Chen's conjecture for higher codimension. 
In contrast to the above results, our theorem does not assume that the mean curvature vector satisfies any additional condition.

We now give an outline of the proof. First, the biharmonic equation is written as a system of ordinary differential equations for three functions $a,b,c$ determined by the profile curve. After introducing a new variable $\tau$ by $\tau=\int_{s_0}^s\frac{1}{r(w)}dw$, this system becomes
\[
\ddot a=0,
\qquad
\ddot b=0,
\qquad
\ddot c-c=0.
\]
Together with the arc-length condition, this yields a compatibility equation. We then classify all possible non-minimal branches into seven cases and show that each of them leads to a contradiction. The most general case is reduced to a finite system of polynomial equations, and a Groebner basis elimination gives the final obstruction.


\section{Rotational surfaces and the biharmonic equation}\label{sec2}

We consider a simple rotational surface in $\E^4$. Let
\[
X(s,\theta)=\bigl(x(s),y(s),r(s)\cos\theta,r(s)\sin\theta\bigr),
\]
where $r(s)>0$ and $s$ is the arc-length parameter of the profile curve
\[
\gamma(s)=\bigl(x(s),y(s),r(s)\bigr)\subset\E^3.
\]
Thus
\[
x'(s)^2+y'(s)^2+r'(s)^2=1.
\]
We have
\[
X_s=(x',y',r'\cos\theta,r'\sin\theta),
\qquad
X_\theta=(0,0,-r\sin\theta,r\cos\theta).
\]
Since $s$ is the arc-length parameter of the profile curve, the induced metric is
\[
g=ds^2+r(s)^2d\theta^2.
\]
Therefore, for an $\E^4$-valued function $Y$, the Laplace-Beltrami operator is given by
\[
\Delta Y
=\frac1r\frac{\partial}{\partial s}\left(r\frac{\partial Y}{\partial s}\right)
+\frac1{r^2}\frac{\partial^2Y}{\partial\theta^2}.
\]
A direct computation gives
\[
\Delta X
=
\left(
 x''+\frac{r'}r x',
 y''+\frac{r'}r y',
 \left(r''+\frac{r'^2-1}{r}\right)\cos\theta,
 \left(r''+\frac{r'^2-1}{r}\right)\sin\theta
\right).
\]
Set
\[
a:=x''+\frac{r'}r x',
\qquad
b:=y''+\frac{r'}r y',
\qquad
c:=r''+\frac{r'^2-1}{r}.
\]
Then the mean curvature vector is
\[
{\HH}=\frac12\Delta X
=\frac12\bigl(a,b,c\cos\theta,c\sin\theta\bigr).
\]
Moreover, for
\[
Y(s,\theta)=\bigl(f(s),g(s),h(s)\cos\theta,h(s)\sin\theta\bigr),
\]
one has
\[
\Delta Y=
\left(
 f''+\frac{r'}r f',
 g''+\frac{r'}r g',
 \left(h''+\frac{r'}r h'-\frac{h}{r^2}\right)\cos\theta,
 \left(h''+\frac{r'}r h'-\frac{h}{r^2}\right)\sin\theta
\right).
\]
Hence $\Delta\HH=0$ is equivalent to
\begin{equation}\label{eq-basic-system-s}
\begin{cases}
\displaystyle a''+\frac{r'}r a'=0,\\[0.8em]
\displaystyle b''+\frac{r'}r b'=0,\\[0.8em]
\displaystyle c''+\frac{r'}r c'-\frac{c}{r^2}=0.
\end{cases}
\end{equation}
On the other hand, ${\bf H}=0$ is equivalent to $a=b=c=0.$

We introduce a new variable $\tau$ by
\[
\tau=\int_{s_0}^s\frac{1}{r(w)} dw,~~(s_0\in \mathbb{R})
\]
and denote differentiation with respect to $\tau$ by a dot. Then
\[
f'=\frac{\dot f}{r}
\]
and hence
\[
f''+\frac{r'}r f'=\frac{\ddot f}{r^2}.
\]
Therefore, \eqref{eq-basic-system-s} becomes
\[
\ddot a=0,
\qquad
\ddot b=0,
\qquad
\ddot c-c=0.
\]
It follows that
\begin{equation*}
a(\tau)=A_1\tau+A_0,
\qquad
b(\tau)=B_1\tau+B_0,
\qquad
c(\tau)=C_+e^\tau+C_-e^{-\tau}.
\end{equation*}
Moreover,
\[
a=\frac{\ddot x}{r^2},
\qquad
b=\frac{\ddot y}{r^2},
\qquad
c=\frac{\ddot r-r}{r^2}.
\]
Thus
\begin{equation}\label{eq-xyr-tau}
\ddot x=r^2a,
\qquad
\ddot y=r^2b,
\qquad
\ddot r=r+r^2c.
\end{equation}
The arc-length condition becomes
\begin{equation}\label{eq-arc-tau}
\dot x^2+\dot y^2+\dot r^2=r^2.
\end{equation}
Differentiating \eqref{eq-arc-tau} and using \eqref{eq-xyr-tau}, we obtain the compatibility equation
\begin{equation}\label{eq-compatibility}
a\dot x+b\dot y+c\dot r=0.
\end{equation}

Set
\[
u(\tau):=(x(\tau),y(\tau)),
\qquad
q(\tau):=(a(\tau),b(\tau))=P\tau+Q,
\]
where
\[
P=(A_1,B_1),
\qquad
Q=(A_0,B_0).
\]


\section{Proof of Theorem \ref{main}}
We argue by contradiction to prove Theorem \ref{main}. Suppose that $\HH\neq0$. From the form obtained in the previous section, all possible non-minimal branches are divided into the following seven cases.
\begin{enumerate}
\item[{\rm (I)}]
$c\equiv0$ and $P= Q=0$.
\item[{\rm (II)}]
$c\equiv0$, $\det(P,Q)=0$ and $(P,Q)\not=(0,0)$.
\item[{\rm (III)}]
$c\equiv0$ and $\det(P,Q)\neq0$.
\item[{\rm (IV)}]
$c\neq0$ at some point, and $P= Q=0$.
\item[{\rm (V)}]
$c\neq0$ at some point, and $q\equiv {\rm const}\neq0$.
\item[{\rm (VI)}]
$c\neq0$ at some point, and $\det(P,Q)=0$ with $P\neq0$.
\item[{\rm (VII)}]
$c\neq0$ at some point, and $\det(P,Q)\neq0$.
\end{enumerate}

We rule out the cases (I)--(VII) in order.


\subsection{Exclusion of Case {\rm (I)}}\label{secA1}

\begin{proposition}\label{prop-caseA1}
Case {\rm (I)} cannot occur.
\end{proposition}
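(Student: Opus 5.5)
Case (I) means $c\equiv0$ and $P=Q=0$, so $a\equiv b\equiv0$ and $c\equiv0$ on the whole interval. By the formula ${\HH}=\frac12(a,b,c\cos\theta,c\sin\theta)$ from Section \ref{sec2}, this gives ${\HH}\equiv0$. That contradicts the standing assumption ${\HH}\neq0$ of the proof of Theorem \ref{main}.

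So Case (I) is the degenerate branch in which the surface is already minimal, and it is excluded directly by the contradiction hypothesis. The plan has three steps.

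First, recall that $q(\tau)=P\tau+Q$. With $P=Q=0$ this gives $a(\tau)=A_1\tau+A_0\equiv0$ and $b(\tau)=B_1\tau+B_0\equiv0$ for every $\tau$.

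Second, the hypothesis $c\equiv0$ holds on the whole domain. The reparametrization $\tau=\int_{s_0}^s r^{-1}\,dw$ is a diffeomorphism because $r>0$, so $a$, $b$, $c$ vanish identically as functions of $s$ as well.

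Third, Section \ref{sec2} shows that ${\HH}=0$ is equivalent to $a=b=c=0$. Hence ${\HH}\equiv0$, which is the desired contradiction.

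There is no real obstacle here; the only point to check is that the vanishing is identical, not pointwise. This holds because $q$ is affine in $\tau$ and $c$ is assumed $\equiv0$, so nothing analytic is required. For concreteness one may also note what such surfaces are, although the argument does not need it. From \eqref{eq-xyr-tau} we get $\ddot x=\ddot y=0$ and $\ddot r=r$. The arc-length condition \eqref{eq-arc-tau} then forces the profile curve to be the one generating a flat plane or a catenoid-type minimal rotational surface, consistent with ${\HH}=0$.
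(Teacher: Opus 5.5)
Your proposal is correct and takes the same approach as the paper: in Case (I) one has $a\equiv b\equiv c\equiv0$, so $\HH=\frac12(a,b,c\cos\theta,c\sin\theta)\equiv0$, which contradicts the standing assumption $\HH\neq0$. Your closing remark identifying these surfaces as planes or catenoid-type surfaces is accurate, but, as you note yourself, the argument does not need it.
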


\begin{proof}
In this case, $a\equiv b\equiv c\equiv0$, that is, the surface is minimal, which is a contradiction.
\end{proof}


\subsection{Exclusion of Case {\rm (II)}}\label{secA23}

\begin{proposition}\label{prop-caseA23}
Case {\rm (II)} cannot occur.
\end{proposition}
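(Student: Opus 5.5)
The plan is to exploit the fact that in Case (II) the vector $q(\tau)=P\tau+Q$ always points in one fixed direction of the $(x,y)$-plane. The compatibility equation \eqref{eq-compatibility} should then force the profile curve to have constant component in that direction, and this clashes with \eqref{eq-xyr-tau}.

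\emph{Step 1 (normalization).} Since $\det(P,Q)=0$ and $(P,Q)\neq(0,0)$, the vectors $P$ and $Q$ are linearly dependent and not both zero. Hence there are a unit vector $e\in\R^2$ and real constants $\alpha,\beta$, not both zero, with $P=\alpha e$ and $Q=\beta e$. Therefore
\[
q(\tau)=\varphi(\tau)e,\qquad \varphi(\tau)=\alpha\tau+\beta .
\]
Let $e^\perp$ be a unit vector orthogonal to $e$, and set $\xi:=\langle u,e\rangle$ and $\eta:=\langle u,e^\perp\rangle$. This amounts to rotating the first two coordinates of $\E^4$, an isometry that commutes with the rotation in the last two coordinates, so the surface stays a simple rotational surface and is still biharmonic. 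By linearity, \eqref{eq-xyr-tau} gives $\ddot\xi=r^2\varphi$ and $\ddot\eta=0$. We may thus assume $e=(1,0)$, so that $a=\varphi$ and $b\equiv0$.

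\emph{Step 2 (compatibility).} Since $c\equiv0$, the compatibility equation \eqref{eq-compatibility} reduces to $\varphi\,\dot\xi=0$. The function $\varphi$ is affine and not identically zero, so it has at most one zero. Hence there is an open interval $J$ of the $\tau$-domain on which $\varphi\neq0$, and on $J$ we get $\dot\xi\equiv0$.

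\emph{Step 3 (contradiction).} Differentiating on $J$ gives $\ddot\xi\equiv0$ there. But $\ddot\xi=r^2\varphi$ with $r>0$ and $\varphi\neq0$ on $J$, which is a contradiction. Consequently Case (II) cannot occur.

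I do not expect a real obstacle here. The only points needing care are the reduction to $b\equiv0$ and the observation that $\varphi$ is a nonzero affine function, so it is nonvanishing on an open interval rather than only at isolated points. The equation $\ddot r=r$ coming from $c\equiv0$ is not even needed.
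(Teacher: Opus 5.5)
Your proof is correct and follows essentially the same route as the paper: write $q=\varphi e$, use the compatibility equation to get $\dot u\perp e$, then differentiate to contradict $\ddot u\cdot e=r^2\varphi\neq0$. Restricting to an open interval where the affine function $\varphi$ does not vanish makes explicit a point the paper passes over silently.
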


\begin{proof}
Since $P,Q$ are parallel, we have $q(\tau)=\phi(\tau)e$ 
for some scalar function $\phi$ and some constant unit vector $e$, with $\phi\not\equiv0$.
By \eqref{eq-compatibility}, $\dot u \cdot q =0$. Hence, the vector $\dot u$ is perpendicular to $e$. On the other hand, \eqref{eq-xyr-tau} gives $\ddot u=r^2\phi e$. Taking the $e$-component and differentiating the orthogonality relation, we obtain $0=r^2\phi$, which is impossible. 
\end{proof}


\subsection{Exclusion of Case {\rm (III)}}\label{secA4}

\begin{proposition}\label{prop-caseA4}
Case {\rm (III)} cannot occur.
\end{proposition}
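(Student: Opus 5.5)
Since $c\equiv0$, the radial equation in \eqref{eq-xyr-tau} reduces to $\ddot r=r$, so $r(\tau)=\alpha e^{\tau}+\beta e^{-\tau}$ for constants $\alpha,\beta$. The plan is to differentiate the compatibility equation \eqref{eq-compatibility} twice. This yields an identity between a polynomial in $\tau$ and the exponentials $e^{\pm\tau}$, which forces $\alpha=\beta=0$. That contradicts $r>0$.

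\emph{Deriving the identity.} With $c\equiv0$, \eqref{eq-compatibility} reads $q\cdot\dot u=0$, where $q=P\tau+Q$ and $\ddot u=r^2q$ by \eqref{eq-xyr-tau}.
\begin{itemize}
\item A first differentiation, using $\dot q=P$, gives $P\cdot\dot u+r^2|q|^2=0$.
\item A second differentiation gives $P\cdot\ddot u+2r\dot r|q|^2+2r^2\,q\cdot P=0$.
\item Substituting $\ddot u=r^2q$ turns the first term into $r^2\,P\cdot q$.
\item Dividing by $r>0$ then yields
\[
3r\,(P\cdot q)+2\dot r\,|q|^2=0,
\qquad\text{that is,}\qquad
3r\bigl(|P|^2\tau+P\cdot Q\bigr)+2\dot r\,|P\tau+Q|^2=0 .
\]
\end{itemize}

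\emph{Separating the exponentials.} Insert $r=\alpha e^\tau+\beta e^{-\tau}$ and $\dot r=\alpha e^\tau-\beta e^{-\tau}$. The functions $\tau^k e^{\tau}$ and $\tau^k e^{-\tau}$ are linearly independent on any interval, so the coefficients must vanish separately:
\[
\alpha\bigl(2|P\tau+Q|^2+3|P|^2\tau+3P\cdot Q\bigr)\equiv0,
\qquad
\beta\bigl(2|P\tau+Q|^2-3|P|^2\tau-3P\cdot Q\bigr)\equiv0 .
\]
Since $\det(P,Q)\neq0$, we have $P\neq0$. Each bracket is therefore a polynomial in $\tau$ with leading term $2|P|^2\tau^2\neq0$, so it is not identically zero on an interval. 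Hence $\alpha=\beta=0$, so $r\equiv0$, contradicting $r>0$.

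The only step needing care is the bookkeeping in the two differentiations, in particular substituting $\ddot u=r^2q$ correctly. After that the argument is just linear independence of $\tau^k e^{\pm\tau}$. Note that the hypothesis $\det(P,Q)\neq0$ is used only through its consequence $P\neq0$.
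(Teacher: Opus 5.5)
Your argument is correct, but it reaches the contradiction by a different route from the paper's.

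\emph{The paper's route.} The paper writes $\dot u=\lambda Jq$ and derives $\lambda\det(P,Q)=r^2\|q\|^2$ and $\frac{d}{d\tau}(\lambda\|q\|)=0$, hence $r^2\|q\|^3=K\det(P,Q)$. After normalizing $P=(\alpha,0)$, $Q=(0,\beta)$, this gives the explicit profile $r=Lw^{-3/4}$ with $w=\alpha^2\tau^2+\beta^2$. Then $\ddot r=r$ becomes an impossible identity between a quadratic and a quartic polynomial.

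\emph{Your route.} Your identity $3r(P\cdot q)+2\dot r\|q\|^2=0$ is exactly $\frac{d}{d\tau}\bigl(r^2\|q\|^3\bigr)=0$ divided by $r\|q\|$. So the underlying conservation law is the same as the paper's. You obtain it differently, by differentiating the compatibility equation twice rather than through the ansatz $\dot u=\lambda Jq$. You then reverse the roles of the two equations. The paper integrates the conservation law and tests the resulting $r$ against $\ddot r=r$. You instead insert the general solution of $\ddot r=r$ into the first-order identity and conclude by linear independence of $\tau^k e^{\pm\tau}$.

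\emph{What each approach buys.} Your version avoids the normalization of $P,Q$ and the second-derivative computation for $w^{-3/4}$. It also uses only $P\neq0$, not $\det(P,Q)\neq0$. Consequently the same computation disposes of Case (II) whenever $P\neq0$; if $P=0$, your first differentiation already gives $r^2\|Q\|^2=0$. So your argument handles Cases (II) and (III) uniformly. The paper's route genuinely needs $\det(P,Q)\neq0$, both to solve for $r$ and to ensure $q$ never vanishes. In exchange it produces a closed form for $r$, although that is not needed for the contradiction.
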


\begin{proof}
Assume that $c\equiv0$ and $\det(P,Q)\neq0$. Then \eqref{eq-xyr-tau} and \eqref{eq-compatibility} become
\[
\ddot u=r^2q,
\qquad
\ddot r=r,
\qquad
\dot u\cdot q=0.
\]
Since $\dot u \cdot q=0$ in $\R^2$, there exists a scalar function $\lambda(\tau)$ such that
\[
\dot u=\lambda Jq,
\]
where $J$ denotes the rotation by the angle $\frac{\pi}{2}$. Differentiating this equality gives
\[
\ddot u=\dot\lambda Jq+\lambda JP.
\]
On the other hand, $\ddot u=r^2q$. Taking the inner products with $q$ and $Jq$, respectively, we obtain
\[
\lambda\det(P,Q)=r^2 ||q|| ^2,
\qquad
\dot \lambda ||q||^2+\lambda ( P \cdot q )=0,
\]
where $||q||=\sqrt{q\cdot q}$.
From this, $\frac{d}{d\tau}\lambda||q||=0$. Hence, $\lambda||q||$ is constant, say $K$, that is 
\[
\lambda(\tau)=\frac{K}{||q(\tau)||}
\]
and hence
\[
r^2(\tau)=\frac{K\det(P,Q)}{||q(\tau)||^3}.
\]
After translating $\tau$ and rotating the $(x,y)$-plane, we may assume that
\[
P=(\alpha,0),
\qquad
Q=(0,\beta),
\qquad
\alpha\beta\neq0.
\]
Then
\[
q(\tau)=(\alpha\tau,\beta),
\qquad
||q(\tau)||^2=\alpha^2\tau^2+\beta^2,
\]
and therefore
\[
r=Lw^{-3/4},
\]
where 
\[
L:=\sqrt{K\det(P,Q)}(>0) \quad \text{and} \quad w(\tau):=\alpha^2\tau^2+\beta^2.
\]
Then
\[
\ddot r=Lw^{-11/4}\left(\frac{15}{4}\alpha^4\tau^2-\frac32\alpha^2\beta^2\right).
\]
Thus $\ddot r=r$ implies
\[
\frac{15}{4}\alpha^4\tau^2-\frac32\alpha^2\beta^2=(\alpha^2\tau^2+\beta^2)^2.
\]
The left-hand side is a polynomial of degree two in $\tau$, whereas the right-hand side is a polynomial of degree four. This is impossible as an identity. Hence Case {\rm (III)} cannot occur.
\end{proof}

\subsection{Exclusion of Case {\rm (IV)}}\label{secB1}

\begin{proposition}\label{prop-caseB1}
Case {\rm (IV)} cannot occur.
\end{proposition}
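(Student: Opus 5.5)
The plan is to exploit the fact that in Case (IV) the tangential data in the $(x,y)$-plane become trivial. Then the compatibility equation \eqref{eq-compatibility} forces $r$ to be constant wherever $c\neq0$. Finally, the ODE $\ddot c=c$ cannot admit a nonzero constant solution.

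First, since $P=Q=0$, we have $a\equiv b\equiv0$. By \eqref{eq-xyr-tau}, $\ddot x=\ddot y=0$, so $\dot u$ is a constant vector; I will not even need this beyond noting it is harmless. With $a=b=0$, the compatibility equation \eqref{eq-compatibility} reduces to
\[
c\,\dot r=0 .
\]

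Next, since $c=C_+e^\tau+C_-e^{-\tau}$ is not identically zero, it has at most one zero. Indeed, $c=0$ means $e^{2\tau}=-C_-/C_+$, which has at most one solution. Hence there is an open interval $I$ on which $c\neq0$. On $I$ we get $\dot r=0$, so $r\equiv r_0>0$ is constant there.

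Then the third equation of \eqref{eq-xyr-tau} gives $0=\ddot r=r_0+r_0^2c$ on $I$, so $c\equiv-1/r_0$ is a nonzero constant on $I$. But $c$ satisfies $\ddot c-c=0$, and substituting the constant gives $c=\ddot c=0$ on $I$. This contradicts $c=-1/r_0\neq0$. Equivalently, a function of the form $C_+e^\tau+C_-e^{-\tau}$ is constant on an interval only if $C_+=C_-=0$. Hence Case (IV) cannot occur.

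The only point needing care is the passage from ``$c\neq0$ at some point'' to ``$c\neq0$ on an open interval,'' which the explicit exponential form of $c$ settles immediately. The arc-length condition \eqref{eq-arc-tau} is not needed, although it is consistent ($|\dot u|^2=r_0^2$), so the contradiction comes purely from the $r$-equation.
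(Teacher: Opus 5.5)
Your proof is correct and follows the paper's argument exactly: compatibility reduces to $c\dot r=0$, so $r$ is constant on an interval where $c\neq0$. The $r$-equation then forces $c=-1/r$ to be a nonzero constant, which is incompatible with $\ddot c=c$; your extra justification that $c\neq0$ on an open interval is fine, though continuity of $c$ alone would already suffice.
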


\begin{proof}
By \eqref{eq-compatibility}, we have $c\dot r=0$. Hence, on an open interval where $c\neq0$, we have $\dot r=0$. The third equation of \eqref{eq-xyr-tau} then gives $c=-1/r$, which is constant. However, the only constant solution of $\ddot c=c$ is $c=0$. This is a contradiction.
\end{proof}


\subsection{Exclusion of Case {\rm (V)}}\label{secB2}

\begin{proposition}\label{prop-caseB2}
Case {\rm (V)} cannot occur.
\end{proposition}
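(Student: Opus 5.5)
The plan is to reduce Case (V) to a single algebraic relation between $r$ and $c$, and then show that this relation is incompatible with the remaining ODE for $r$.

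\textbf{Normalization.} Since $q\equiv Q\neq0$ is constant, I rotate the $(x,y)$-plane so that $q=(\alpha,0)$ with $\alpha\neq0$. Then \eqref{eq-xyr-tau} gives $\ddot y=0$, so $\dot y\equiv m$ is constant. The remaining equations are
\[
\ddot x=\alpha r^2,\qquad \ddot r=r+r^2c,\qquad \alpha\dot x+c\dot r=0,\qquad \dot x^2+m^2+\dot r^2=r^2,
\]
with $c=C_+e^\tau+C_-e^{-\tau}\not\equiv0$.

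Since a nonzero constant does not solve $\ddot c=c$, the function $c$ is nonconstant. Hence $\dot c$ vanishes at most at one point, and I work on an open interval where $\dot c\neq0$. I will also use the first integral $\dot c^2-c^2=-4C_+C_-=:D$.

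\textbf{Two relations.} The compatibility equation gives $\dot x=-c\dot r/\alpha$. Substituting into the arc-length condition yields
\[
(\alpha^2+c^2)\,\dot r^2=\alpha^2(r^2-m^2).
\]
Differentiating the compatibility equation and using $\ddot x=\alpha r^2$ and $\ddot r=r+r^2c$ gives
\[
\dot c\,\dot r=-r\bigl(c+r(\alpha^2+c^2)\bigr).
\]
This expresses $\dot r$ algebraically in terms of $r,c,\dot c$. Substituting it into the previous relation gives
\[
r^2\bigl(c+r(\alpha^2+c^2)\bigr)^2(\alpha^2+c^2)=\alpha^2(r^2-m^2)\,\dot c^2 .
\]
This is a polynomial constraint $F(r,c,\dot c)=0$, which is quartic in $r$.

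\textbf{Closing the system.} To use the remaining information, I differentiate the relation $\dot c\,\dot r=-r(c+r(\alpha^2+c^2))$ once more in $\tau$. I then eliminate the derivatives using
\[
\ddot r=r+r^2c,\qquad \ddot c=c,\qquad \dot r=-\frac{r(c+r(\alpha^2+c^2))}{\dot c}.
\]
This produces a second polynomial relation $G(r,c,\dot c)=0$. The resultant of $F$ and $G$ in $r$ is a polynomial identity in $c$ and $\dot c$, with $\dot c^2=c^2+D$. Since $c$ is a nonconstant combination of $e^{\pm\tau}$, such an identity must vanish coefficientwise as a polynomial in $e^\tau$. I expect the top-degree coefficients to force $\alpha=0$, which contradicts $q\neq0$.

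\textbf{Main obstacle.} The hard step is this final elimination. I would first compare the leading behaviour in $c$ (equivalently, the limit $\tau\to\pm\infty$ when $C_\pm\neq0$) to extract the contradiction by hand. As a guide, $G$ should give $\dot r^2\sim r^4c^2/\dot c^2$, while $F$ gives $\dot r^2\approx\alpha^2 r^2/c^2$; these have incompatible growth. The degenerate subcase $C_+C_-=0$, where $c=Ce^{\pm\tau}$ and $\dot c=\pm c$, is simpler and would be checked separately. If hand comparison fails, I would run a Gröbner basis computation, as the paper does for Case (VII).
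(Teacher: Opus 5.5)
Your setup is correct: the normalization $q=(\alpha,0)$, $\dot y\equiv m$, the relation $(\alpha^2+c^2)\dot r^2=\alpha^2(r^2-m^2)$, and $\dot c\,\dot r=-rM$ with $M:=c+(\alpha^2+c^2)r$. The gap is that the proposal stops exactly where the contradiction has to be produced. You never show that the eliminant is nonzero, and the mechanism you predict is not what happens.

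Clearing $\dot c$ in the differentiated relation gives $G=r\bigl(\dot c^2(2+3cr)-2M^2\bigr)$, which is divisible by $r$, while $F(0)=-\alpha^2m^2\dot c^2$. Hence $\mathrm{Res}_r(F,G)$ carries the factor $\alpha^2m^2\dot c^2$ and vanishes identically when $m=0$. Even after dividing $G$ by $r$, $r=0$ remains a common root when $m=D=0$. So the top coefficients cannot ``force $\alpha=0$''. The planar-profile branch $\dot y\equiv0$ is invisible to the resultant as you set it up, and must be handled using $r>0$.

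Your growth heuristic is also unreliable. Combining $\dot c\,\dot r=-rM$ with the differentiated relation gives exactly $\dot r^2=r^2(1+\tfrac32cr)$. The roots of the eliminated relation behave like $r\approx-1/(2c)$ or $r=O(c^{-3})$, where $c^2r$ does not dominate $c$. Moreover, limits $\tau\to\pm\infty$ are meaningful only for the polynomial identity, since $r$ lives on an interval.

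The route can be completed as follows. Using $\dot c^2=c^2+D$, the relation $G/r=0$ becomes $\Phi:=2(\alpha^2+c^2)^2r^2+c(c^2+4\alpha^2-3D)r-2D=0$. Modulo $\Phi$, $F$ equals $\tfrac12\dot c^2\,\Xi$ with $\Xi:=3c(\alpha^2+c^2)r^3+2c^2r^2+2\alpha^2m^2$. Both $\Phi$ and $\Xi$ involve only $c$, so no separate treatment of $C_+C_-=0$ is needed.
\begin{itemize}
\item If $m\neq0$, $\mathrm{Res}_r(\Phi,\Xi)$ is a polynomial in $c$ with leading term $2\alpha^2m^2(1+16\alpha^2m^2)c^{12}$, read off from the roots $r\approx-1/(2c)$ and $r=O(c^{-3})$ of $\Phi$. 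This is impossible for nonconstant $c$.
\item If $m=0$, then $r>0$ forces $r=-2c/(3(\alpha^2+c^2))$, and $\Phi$ becomes $c^4-8\alpha^2c^2-9\alpha^2D=0$. This is again impossible.
\end{itemize}

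Completed this way, your argument would be a uniform alternative to the paper's. The paper never uses the arc-length relation in Case (V). Instead it evaluates derivatives of $\dot c\dot r+cr+(\alpha^2+c^2)r^2$ at a critical point or zero of $c$, in three cases according to the sign of $\dot c^2-c^2$.
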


\begin{proof}
After a rotation in the $(x,y)$-plane, we may assume that
\[
q=(a,b)=(\alpha,0),
\qquad
\alpha>0.
\]
Then
\[
\ddot x=\alpha r^2,
\qquad
\ddot y=0,
\qquad
\ddot r=r+cr^2,
\qquad
\ddot c=c.
\]
Thus $\dot y=k$ is constant. The compatibility equation \eqref{eq-compatibility} becomes
\begin{equation}\label{eq-case1-comp}
\alpha \dot x+c\dot r=0.
\end{equation}
Combining this with \eqref{eq-arc-tau}, we get
\begin{equation*}
(\alpha^2+c^2)\dot r^2+\alpha^2k^2=\alpha^2r^2.
\end{equation*}
Differentiating \eqref{eq-case1-comp} and using $\ddot x=\alpha r^2$, we obtain
\begin{equation}\label{eq-case1-G}
\dot c\dot r+cr+(\alpha^2+c^2)r^2=0.
\end{equation}
Set
\[
G:=\dot c\dot r+cr+(\alpha^2+c^2)r^2.
\]
Then $G\equiv0$.

Since $\ddot c=c$, we have
\[
c(\tau)=C_+e^\tau+C_-e^{-\tau}
\]
and
$
\dot c^2-c^2
$
is constant. We divide the proof into three cases.

\smallskip
\noindent
{\bf Case 1.} $\dot c^2-c^2<0$.

In this case, $c$ has a critical point. After translating $\tau$, we may assume that
\[
\dot c(0)=0,
\qquad
c(0)=c_0\neq0.
\]
Put $A:=\alpha^2+c_0^2$. From \eqref{eq-case1-G} at $\tau=0$, we obtain
\[
c_0r(0)+Ar(0)^2=0.
\]
Since $r(0)>0$, this gives
\begin{equation*}
r(0)=-\frac{c_0}{A}.
\end{equation*}
Differentiating $G\equiv0$ twice and three times, and evaluating at $\tau=0$, we obtain
\[
\ddot G(0)=\frac{2A^3\dot r(0)^2+c_0^2(c_0^2-2\alpha^2)}{A^2},
\]
\[
\dddot G(0)=-\frac{10c_0^3}{A}\dot r(0),
\]
hence, $\dot r(0)=0$ and $c_0^2=2\alpha^2$. 
Therefore, we obtain
\[
G^{(4)}(0)=\frac{56}{9}>0,
\]
which is a contradiction.

\smallskip
\noindent
{\bf Case 2.} $\dot c^2-c^2>0$.

In this case, $c$ has a zero. After translating $\tau$, we may assume that
\[
c(0)=0,
\qquad
\dot c(0)=c_1\neq0.
\]
From \eqref{eq-case1-G} at $\tau=0$, we get
\begin{equation}\label{eq-case1-C2-1}
c_1\dot r(0)+\alpha^2r(0)^2=0.
\end{equation}
Moreover, using $\dot G(0)=0$, we obtain
\[
2r(0)(\alpha^2\dot r(0)+c_1)=0.
\]
Since $r(0)>0$, it follows that
\begin{equation}\label{eq-case1-C2-2}
\dot r(0)=-\frac{c_1}{\alpha^2}.
\end{equation}
Substituting \eqref{eq-case1-C2-2} into \eqref{eq-case1-C2-1}, we get
\[
r(0)^2=\frac{c_1^2}{\alpha^4}.
\]
Substituting this into $\ddot G(0)$, we have
\[
\ddot G(0)=\frac{3c_1^4}{\alpha^4}>0.
\]
This is a contradiction.

\smallskip
\noindent
{\bf Case 3.} $\dot c^2-c^2=0$.

Put $c(0)=c_0$.
In this case,
\[
c(\tau)=c_0e^{\sigma\tau},
\qquad
c_0\neq0,
\qquad
\sigma=\pm1.
\]
In particular, $\dot c=\sigma c$.  Solving \eqref{eq-case1-G} together with its first derivative at $\tau=0$, we obtain
\[
r(0)= -\frac{c_0(c_0^2+4\alpha^2)}{2(\alpha^2+c_0^2)^2},
\]
\[
\dot r(0)=
\frac{\sigma c_0(c_0^2-2\alpha^2)(c_0^2+4\alpha^2)}{4(\alpha^2+c_0^2)^3}.
\]
Substituting these into $\ddot G(0)=0$, we find
\[
\ddot G(0)=
\frac{c_0^4(c_0^2+4\alpha^2)^2(c_0^2+16\alpha^2)}{8(\alpha^2+c_0^2)^5}>0.
\]
This is again a contradiction.

Therefore Case {\rm (V)} cannot occur.
\end{proof}


\subsection{Exclusion of Case {\rm (VI)}}\label{secB3}

\begin{proposition}\label{prop-case2}
Case {\rm (VI)} cannot occur.
\end{proposition}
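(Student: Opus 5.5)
Case (VI) is handled like Case (V), but the coefficient is now linear in $\tau$ instead of constant. After a rotation of the $(x,y)$-plane and a translation of $\tau$ (possible because $P\neq0$ and $Q\parallel P$), we may assume
\[
q(\tau)=(\alpha\tau,0),\qquad \alpha\neq0 .
\]
Then \eqref{eq-xyr-tau} gives
\[
\ddot x=\alpha\tau r^2,\qquad \ddot y=0,\qquad \ddot r=r+cr^2,\qquad \ddot c=c .
\]
In particular $\dot y=k$ is a constant. The compatibility equation \eqref{eq-compatibility} becomes
\[
\alpha\tau\dot x+c\dot r=0.
\]
Differentiating this and substituting $\ddot x$ and $\ddot r$ gives
\[
\alpha\dot x+\dot c\dot r+cr+(\alpha^2\tau^2+c^2)r^2=0 .
\]
Multiplying by $\tau$ and using $\alpha\tau\dot x=-c\dot r$ eliminates $\dot x$. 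This yields the scalar identity
\[
G:=\tau\bigl(\dot c\dot r+cr+(\alpha^2\tau^2+c^2)r^2\bigr)-c\dot r\equiv0 .
\]
$G$ involves only $r$, $\dot r$, $c$, $\dot c$ and $\tau$. Its derivatives can be reduced with $\ddot r=r+cr^2$ and $\ddot c=c$, so each $G^{(j)}$ is a polynomial in $(\tau,r,\dot r,c,\dot c)$.

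The key step is to evaluate at $\tau=0$, where $q$ vanishes and the equations degenerate. Set $c(0)=c_0$, $\dot c(0)=c_1$, $r(0)=r_0>0$, $\dot r(0)=r_1$. First, $G(0)=-c_0r_1=0$. Next, $\dot G(0)$ equals $c_1r_1+c_0r_0+c_0^2r_0^2$ minus the derivative of $c\dot r$ at $0$, which is $c_1r_1+c_0(r_0+c_0r_0^2)$. So $\dot G(0)=0$ holds identically, and the information must come from $\ddot G(0)$, $\dddot G(0)$, and so on. We also use the arc-length relation. Combined with compatibility it reads $\dot x^2+k^2+\dot r^2=r^2$ with $\alpha\tau\dot x=-c\dot r$. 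At $\tau=0$ this forces $c_0r_1=0$ and leaves $\dot x(0)$ free, so we keep $\dot x(0)$ as an extra unknown linked through the differentiated compatibility relation.

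We then split according to the sign of the constant $\dot c^2-c^2$, as in Case (V). The aim is that the vanishing of the first few $G^{(j)}(0)$ forces a positive-definite expression, as happened there. When $c$ does not vanish, a translation making $\dot c=0$ at some point is no longer free, since $\tau=0$ is already pinned by $q$. So we instead keep $c(\tau)=C_+e^\tau+C_-e^{-\tau}$ with two free constants and compute $G^{(j)}(0)$ for $j=2,3,4,5$ as polynomials in $(C_\pm,r_0,r_1,\alpha)$. We also treat separately the subcase $c_0=0$, where $r_1$ is not forced to vanish, and the subcase $c_0\neq0$, $r_1=0$.

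The main obstacle is that $\tau=0$ is a degenerate point for the elimination of $\dot x$. Because of this, low-order derivatives carry little information, and the resulting polynomial system has several parameters. If the expansion at $\tau=0$ does not close up by hand, the fallback is to expand everything in power series about $\tau=0$, or to work at a generic point $\tau_0\neq0$. Then we eliminate variables with a resultant or Gr\"obner basis computation, as the paper's outline indicates for the most general case, and show that the only solutions have $c\equiv0$ or $r\le0$. Either outcome contradicts the assumptions of Case (VI), since there $c\neq0$ somewhere and $r>0$.
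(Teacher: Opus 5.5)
Your reduction is correct and is essentially the paper's route. With $F:=\alpha\tau\dot x+c\dot r$ one has $G=\tau\dot F-F$, hence $G^{(j)}(0)=(j-1)F^{(j)}(0)$. So the conditions $G^{(j)}(0)=0$ ($j\ge2$) are exactly the paper's comparison of $\tau^j$-coefficients in the compatibility equation; the $\tau$-coefficient only fixes $\dot x(0)$. The gap is that the proposal stops where the proof begins: you state an aim and a computer-algebra fallback but never reach a contradiction.

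The missing chain is short.
Using $G(0)=-c_0r_1=0$, one computes $\ddot G(0)=c_1r_0(2+3c_0r_0)$.
\begin{itemize}
\item If $c_0=0$, then $c_1\neq0$ (since $c\not\equiv0$), so $r_0=0$, which is impossible.
\item If $c_0\neq0$, then $r_1=0$, and either $c_1=0$ or $c_0r_0=-\frac23$. In the latter case $G^{(4)}(0)=0$ reduces to $\frac{17}{54}c_1r_0=0$, so again $c_1=0$.
\end{itemize}
With $r_1=c_1=0$, both $c$ and $r$ are even, so the even-order conditions hold automatically. Put $A=\alpha^2r_0^2$ and $t=c_0r_0\neq0$. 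Then $G'''(0)=0$ and $G^{(5)}(0)=0$ become $3A+2t^3+7t^2+4t=0$ and $15A(t+1)+5t^4+27t^3+35t^2+8t=0$. Eliminating $A$ gives $5t^3+18t^2+20t+12=0$. Its only real root lies in $(-\frac{12}{5},-\frac94)$, and there $3A=-t(2t^2+7t+4)<0$, contradicting $A>0$.

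This last step also shows that your predicted mechanism is off. The contradiction is not a positive-definite expression, and it cannot come from elimination alone. At the orders you propose ($j\le5$), the system $G^{(j)}(0)=0$ has solutions with $r_0>0$ and $c_0\neq0$ but $\alpha^2<0$. A resultant or Gr\"obner computation sees these complex solutions, so it will not output ``$c\equiv0$ or $r\le0$''; you need the real sign condition $\alpha^2r_0^2>0$.

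The split by the sign of $\dot c^2-c^2$ is a red herring. In Case (V) it served to translate $\tau$ to a critical point or zero of $c$, which, as you note yourself, is not allowed once $\tau=0$ is pinned by $q$. The relevant split is $c_0=0$ versus $c_0\neq0$, then $c_1=0$ versus $c_0r_0=-\frac23$.

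Finally, the arc-length relation contributes nothing in this case, because $\dot y=k$ is a free constant and it only yields an inequality. The condition $c_0r_1=0$ comes from the compatibility equation, not from arc length.
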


\begin{proof}
After translating $\tau$ and rotating the $(x,y)$-plane, we may assume that
\[
q(\tau)=(\alpha\tau,0),
\qquad
\alpha\neq0.
\]
Thus
\begin{equation}\label{eq-case2-ode}
\ddot x=\alpha\tau r^2,
\qquad
\ddot y=0,
\qquad
\ddot r=r+cr^2,
\qquad
\ddot c=c.
\end{equation}
Moreover, \eqref{eq-compatibility} and \eqref{eq-arc-tau} become
\begin{equation}\label{eq-case2-comp}
\alpha\tau\dot x+c\dot r=0,
\qquad
\dot x^2+\dot y^2+\dot r^2=r^2.
\end{equation}
Since $\ddot y=0$, $\dot y=k$ is constant.

Around $\tau=0$, write
\[
c(\tau)=c_0+c_1\tau+\frac{c_0}{2}\tau^2+\frac{c_1}{3!}\tau^3+\frac{c_0}{4!}\tau^4+\frac{c_1}{5!}\tau^5+\frac{c_0}{6!}\tau^6+\frac{c_1}{7!}\tau^7+\mathcal{O}(\tau^8),
\]
\[
r(\tau)=r_0+r_1\tau+\frac{r_2}{2}\tau^2+\frac{r_3}{3!}\tau^3+\frac{r_4}{4!}\tau^4+\frac{r_5}{5!}\tau^5+\frac{r_6}{6!}\tau^6+\frac{r_7}{7!}\tau^7+\mathcal{O}(\tau^8),
\qquad r_0>0.
\]

From this and the third equation of \eqref{eq-case2-ode}, 
we obtain the following formulas, which are also used in Case (VII).
\begin{claim}\label{Clm:r}
\begin{align}
 r_2&=r_0+c_0r_0^2,\label{eq-r2}\\
 r_3&=r_1+2c_0r_0r_1+c_1r_0^2,\label{eq-r3}\\
 r_4&=r_0+4c_0r_0^2+2c_0^2r_0^3+2c_0r_1^2+4c_1r_0r_1,\notag\\  
 r_5&=r_1+16c_0r_0r_1+10c_0^2r_0^2r_1+8c_0c_1r_0^3+8c_1r_0^2+6c_1r_1^2,\notag\\
 r_6&=r_0+25c_0r_0^2+34c_0^2r_0^3+10c_0^3r_0^4+22c_0r_1^2+20c_0^2r_0r_1^2\notag\\
 &\qquad \ +44c_1r_0r_1+56c_0c_1r_0^2r_1+8c_1^2r_0^3.\notag\\
 r_7&=r_1+138c_0r_0r_1+242c_0^2r_0^2r_1+80c_0^3r_0^3r_1+69c_1r_0^2\notag\\
 &\qquad \ +184c_0c_1r_0^3+86c_0^2c_1r_0^4+66c_1r_1^2+152c_0c_1r_0r_1^2\notag\\
 &\qquad \ +80c_1^2r_0^2r_1+20c_0^2r_1^3.\notag
\end{align}
\end{claim}

\smallskip
\noindent
{\bf Case 1.} $c_0=0$.

In this case, $c\not\equiv0$ implies $c_1\neq0$. Comparing the coefficient of $\tau^2$ in \eqref{eq-case2-comp}, we get
\[
r_2=0,
\]
because $c_1\not=0$.
On the other hand, by \eqref{eq-r2}, we have
\[
0=r_2=r_0.
\]
This contradicts $r_0>0$.

\smallskip
\noindent
{\bf Case 2.} $c_0\neq0$.

The constant term of \eqref{eq-case2-comp} gives
\[
c_0r_1=0.
\]
Hence $r_1=0$. 
Comparing the coefficient of $\tau^2$ in \eqref{eq-case2-comp}, we obtain
\[
c_1r_2+\frac{c_0}{2}r_3=0.
\]
Combining this with \eqref{eq-r2} and \eqref{eq-r3}, we obtain
\[
c_1r_0\left(1+\frac32 c_0r_0\right)=0.
\]
Therefore,
\[
c_1=0
\qquad\text{or}\qquad
c_0r_0=-\frac23.
\]

\smallskip
\noindent
{\bf Subcase 2a.} $c_1=0$.

In this case,
\[
c(\tau)=c_0+\frac{c_0}{2}\tau^2+\frac{c_0}{4!}\tau^4+\frac{c_0}{6!}\tau^6+\mathcal{O}(\tau^7),
\] Moreover, by Claim \ref{Clm:r},
\[
r_3=0,
\qquad
r_4=r_0(2t^2+4t+1),
\]
where $t:=c_0r_0$. Comparing the coefficients of $\tau^3$ and $\tau^5$ in \eqref{eq-case2-comp}, we get
\begin{equation}\label{p57}
3A+2t^3+7t^2+4t=0,
\end{equation}
\[
15A(t+1)+5t^4+27t^3+35t^2+8t=0,
\]
where $A:=\alpha^2r_0^2>0$. Eliminating $A$, we obtain
\[
5t^3+18t^2+20t+12=0.
\]
Set $f(t):=5t^3+18t^2+20t+12$. $f$ has exactly one zero. Since $f(-\frac{12}{5})<0$ and $f(-\frac{9}{4})>0$, we have $-\frac{12}{5}<t<-\frac{9}{4}.$ Set $\varphi(t):=2t^2+7t+4$. Since $\varphi'(t)<0$ on $-\frac{12}{5}<t<-\frac{9}{4}$ and $\varphi(-\frac{12}{5})<0$, we have $\varphi(t)<0$ on $-\frac{12}{5}<t<-\frac{9}{4}$. 
The equation \eqref{p57} can be written as
\[
3A=-t\varphi(t).
\]
The right-hand side is negative, whereas the left-hand side is positive. This is a contradiction.

\smallskip
\noindent
{\bf Subcase 2b.} $c_0r_0=-\frac23$.

By Claim \ref{Clm:r}, we have
\[
r_2=\frac{r_0}{3},
\qquad
r_3=c_1r_0^2,
\qquad
r_4=-\frac79r_0,
\qquad
r_5=\frac83c_1r_0^2.
\]
Comparing the coefficient of $\tau^3$ in \eqref{eq-case2-comp}, we get
\[
81r_0^2(\alpha^2+c_1^2)=4.
\]
Comparing the coefficient of $\tau^4$ in \eqref{eq-case2-comp}, we obtain
\[
-\frac{17}{54}c_1r_0=0.
\]
Thus $c_1=0$, which is the same condition as in Subcase 2a, hence we have a contradiction.
Therefore Case {\rm (VI)} cannot occur.
\end{proof}


\subsection{Exclusion of Case {\rm (VII)}}\label{secB4}

It remains to consider the generic case
\[
c\not\equiv0,
\qquad
\det(P,Q)\neq0.
\]
In this case, we use coefficient comparison and algebraic elimination. Although the computation is long, it follows only from a finite number of differentiations and Taylor expansions.

\begin{proposition}\label{prop-caseB4}
Case {\rm (VII)} cannot occur.
\end{proposition}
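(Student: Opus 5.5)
We argue as in Cases (III) and (VI), combining the structure from (III) with the Taylor-coefficient method of (VI). After translating $\tau$ and rotating the $(x,y)$-plane we may assume $q(\tau)=(\alpha\tau,\beta)$ with $\alpha\beta\neq0$. Since $\dot u\cdot q=-c\dot r$ by \eqref{eq-compatibility}, we decompose
\[
\dot u=\lambda Jq+\mu q,\qquad \mu=-\frac{c\dot r}{\|q\|^2}.
\]
The unknowns are then $r$, $\lambda$ and the four constants $c_0=c(0)$, $c_1=\dot c(0)$, $\alpha$, $\beta$.

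\textbf{Reduction to a finite system.} First, differentiate \eqref{eq-compatibility} and use \eqref{eq-xyr-tau}. This gives the scalar identity
\[
G:=P\cdot\dot u+r^2\|q\|^2+\dot c\dot r+cr+c^2r^2\equiv0 .
\]
Second, the arc-length condition \eqref{eq-arc-tau} becomes
\[
(\lambda^2+\mu^2)\|q\|^2+\dot r^2=r^2 .
\]
Third, the $Jq$-component of $\ddot u=r^2q$ gives
\[
\dot\lambda\|q\|^2+\lambda(P\cdot q)+\mu\det(P,Q)=0,
\]
and the $q$-component gives a second equation expressing $\lambda\det(P,Q)$ through $r^2\|q\|^2$, $\dot\mu$ and $\mu$. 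Together these determine $\lambda$ algebraically in terms of $r,\dot r,\ddot r,c,\dot c$ at each $\tau$. Substituting it into the $Jq$-equation and into the arc-length equation leaves two identities $F_1\equiv0$, $F_2\equiv0$ involving only $r$, $c$ and $\tau$.

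\textbf{Taylor expansion at $\tau=0$.} Expand $r$ and $c$ at $\tau=0$ exactly as in Case (VI). Claim~\ref{Clm:r} expresses $r_2,\dots,r_7$ as polynomials in $r_0,r_1,c_0,c_1$. Comparing the coefficients of $\tau^0,\tau^1,\dots,\tau^N$ in $F_1$ and $F_2$ (with $N$ around $4$ or $5$) then yields a finite system of polynomial equations in $r_0,r_1,c_0,c_1,\alpha,\beta$. Any common denominators, such as powers of $\beta$ coming from $\|q(0)\|^2=\beta^2$, are cleared first. We also add the auxiliary equations $r_0z-1=0$ and $\alpha\beta w-1=0$ to encode $r_0>0$ and $\alpha\beta\neq0$. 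To encode $c\not\equiv0$, we split into the branches $c_0\neq0$ and $c_0=0,\ c_1\neq0$, each with its own inverse variable. Before this, the degenerate branch where $c$ vanishes identically on an interval is discarded: there Case (III) applies, and by analyticity of the ODE solutions we may choose the base point where $c\neq0$.

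\textbf{Elimination.} On each branch, compute a Gr\"obner basis of the ideal generated by the coefficient equations and the auxiliary ones. We aim to show that it equals $(1)$, so there are no complex solutions. If instead it only produces a univariate polynomial in a scale-invariant quantity (say $t=c_0r_0$, $\alpha^2r_0^2$ or $\beta^2/\alpha^2$), we finish with a sign argument like Subcase 2a of Case (VI): isolate the real roots and show that a quantity forced to be positive, such as $\alpha^2r_0^2$, comes out negative there. The scaling symmetry $r\mapsto\kappa r$, $c\mapsto c/\kappa$, $q\mapsto q/\kappa^2$ lets us normalize $r_0=1$ beforehand, which reduces the number of variables.

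The main obstacle is the elimination step. The number of Taylor coefficients needed before the system becomes inconsistent is not known in advance. Too few orders leave a positive-dimensional solution set, while too many make the Gr\"obner computation explode. I would first try normalizing $r_0=1$ and using orders up to $\tau^4$ in both $F_1$ and $F_2$. I would add higher orders, using $r_5,r_6,r_7$ from Claim~\ref{Clm:r}, only if the resulting ideal remains nontrivial, and I would check any surviving real branch against the positivity constraints by hand.
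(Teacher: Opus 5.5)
Your reduction is sound and is in substance the paper's. The paper eliminates $x_1,y_1$ from the Taylor coefficients, while you eliminate $\dot u$ through $\lambda,\mu$. Take $F_1$ to be the $Jq$-component $\dot\lambda\|q\|^2+\lambda(P\cdot q)-\mu\det(P,Q)$. Then one checks $\frac{d}{d\tau}F_2=2\lambda F_1$, so only $F_2(0)$ carries information. The coefficients of $\tau^0,\dots,\tau^4$ in $F_1$ together with $F_2(0)$ are therefore equivalent, once $\alpha\beta r_0\neq0$, to the paper's $E_2,\dots,E_6$ and $E_0$ in $A=\alpha^2r_0^2$, $B=\beta^2r_0^2$, $p=r_1/r_0$, $t=c_0r_0$, $u=c_1r_0$.

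The gap is that the proposal stops exactly where the proof of Case (VII) begins. You never determine what the elimination produces, and the outcome you aim for (ideal $=(1)$) is false at the order you propose. For $p=u=0$ the polynomials $E_2,E_4,E_6$ vanish identically. Take $t$ a root of $18t^4+73t^3+140t^2+120t+54$, $B=-(5t^4+18t^3+20t^2+12t)/(5t^2+10t+6)$ and $A=(B+t+t^2)^2$. This gives $E_0=E_3=E_5=0$ with $ABt\neq0$. So even with your auxiliary inverse variables the ideal is not $(1)$. Reality and positivity must be used, and nothing in your text shows that finitely many further orders would make the complex system inconsistent.

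What is missing is the concrete obstruction the paper supplies. A Groebner basis of $\langle E_0,E_2,\dots,E_6\rangle$ contains $pt^4$, so $p=0$ or $t=0$.

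If $t=0$, then $E_2$ gives $u=-Bp$, and $p\neq0$ (otherwise $E_3$ reads $3A+2B=0$). Next $E_4$ gives $A=B^2(\tfrac18-p^2)$, so $p^2<\tfrac18$. Then $E_3$ becomes $\tfrac38B^2+2B(1-p^2)=0$, which is impossible for $B>0$.

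If $p=0$, then $E_2$ gives $(3t+2)u=0$.
For $u=0$, the equations $E_0,E_3,E_5$ force $t^2(18t^4+73t^3+140t^2+120t+54)=0$. The quartic is positive on $\R$, so $t=0$ and $c\equiv0$.
For $t=-\tfrac23$ with $u\neq0$, $E_4$ and $E_0$ give $B=\tfrac{34}{9}$ and $A=(\tfrac{32}{9})^2$, and then $E_3=\tfrac{1088}{27}+3u^2>0$.

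Without this explicit elimination result and branch analysis, or an equivalent one, your text is a strategy, not a proof.

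Minor slips:
\begin{itemize}
\item With the paper's conventions the $Jq$-equation carries $-\mu\det(P,Q)$, not $+\mu\det(P,Q)$.
\item Your $G\equiv0$ is the $q$-component equation itself, not an additional condition.
\item The homothety acts by $q\mapsto q/\kappa$, not $q/\kappa^2$; otherwise $\alpha^2r_0^2$ would not be scale invariant.
\item After normalizing $q=(\alpha\tau,\beta)$ the base point is fixed, so you cannot also move it to where $c\neq0$. That is why the split into $c_0\neq0$ versus $c_0=0,\ c_1\neq0$ is needed.
\end{itemize}
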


\begin{proof}
After translating $\tau$ and rotating the $(x,y)$-plane, we may assume that
\[
q(\tau)=(\alpha\tau,\beta),
\qquad
\alpha\beta\neq0.
\]
Thus
\begin{equation}\label{eq-case4-ode}
\ddot x=\alpha\tau r^2,
\qquad
\ddot y=\beta r^2,
\qquad
\ddot r=r+cr^2,
\qquad
\ddot c=c.
\end{equation}
By the same argument, Claim \ref{Clm:r} holds. 

Moreover, \eqref{eq-compatibility} and \eqref{eq-arc-tau} become
\begin{equation}\label{eq-case4-comp-arc}
\alpha\tau\dot x+\beta\dot y+c\dot r=0,
\qquad
\dot x^2+\dot y^2+\dot r^2=r^2.
\end{equation}
At $\tau=0$, put
\[
r(0)=r_0>0,
\qquad
\dot r(0)=r_1,
\qquad
c(0)=c_0,
\qquad
\dot c(0)=c_1.
\]

Set
\[
A:=\alpha^2r_0^2>0,
\qquad
B:=\beta^2r_0^2>0,
\qquad
p:=\frac{r_1}{r_0},
\qquad
t:=c_0r_0,
\qquad
u:=c_1r_0.
\]
Around $\tau=0$, write 
\[
x(\tau)=x_0+x_1\tau+\frac{x_2}{2}\tau^2+\frac{x_3}{3!}\tau^3+\frac{x_4}{4!}\tau^4+\frac{x_5}{5!}\tau^5+\frac{x_6}{6!}\tau^6+\mathcal{O}(\tau^7),
\]
\[
y(\tau)=y_0+y_1\tau+\frac{y_2}{2}\tau^2+\frac{y_3}{3!}\tau^3+\frac{y_4}{4!}\tau^4+\frac{y_5}{5!}\tau^5+\frac{y_6}{6!}\tau^6+\frac{y_7}{7!}\tau^7+\mathcal{O}(\tau^8).
\]
By \eqref{eq-case4-ode}, we have
\begin{align*}
x_2&=0,\\
x_3&=\alpha r_0^2,\\
x_4&=4\alpha r_0r_1,\\
x_5&=6\alpha r_1^2+6\alpha r_0^2+6\alpha c_0r_0^3,\\
x_6&=32\alpha r_0r_1+40\alpha c_0r_0^2r_1+8\alpha c_1r_0^3
\end{align*}
and
\begin{align*}
y_2&=\beta r_0^2,\\
y_3&=2\beta r_0r_1,\\
y_4&=2\beta r_1^2+2\beta r_0^2+2\beta c_0r_0^3,\\
y_5&=8\beta r_0r_1+10\beta c_0 r_0^2r_1+2\beta c_1r_0^3,\\
y_6&=8\beta r_0^2+20\beta c_0r_0^3+10\beta c_0^2r_0^4\\
&\quad+8\beta r_1^2+20\beta c_0r_0r_1^2+16\beta c_1r_0^2r_1,\\
y_7&=32\beta r_0r_1+132\beta c_0r_0^2r_1+36\beta c_1r_0^3\\
&\quad+80\beta c_0^2r_0^3r_1+36\beta c_0c_1r_0^4+20\beta c_0r_1^3+52\beta c_1r_0r_1^2.
\end{align*} 
By using \eqref{eq-case4-comp-arc} and eliminating the Taylor coefficients of $x$ and $y$, we obtain the following algebraic system:
\begin{align}
E_0&:=B(B+pu+t+t^2)^2+Ap^2(B+t^2)-AB=0,\label{eq:E0}\\
E_2&:=2Bp+2pt^2+2pt+3tu+2u=0,\label{eq:E2}\\
E_3&:=3A+2Bp^2+2Bt+2B+2p^2t^2+10ptu+4pu\label{eq:E3}\\
&\quad+2t^3+7t^2+4t+3u^2=0,\notag\\
E_4&:=8Ap+5Bpt+4Bp+Bu+7p^2tu+5pt^3+14pt^2+4pt\label{eq:E4}\\
&\qquad +8pu^2+8t^2u+17tu+4u=0,\notag\\
E_5&:=15Ap^2+15At+15A+10Bp^2t+4Bp^2+8Bpu\label{eq:E5}\\
&\qquad+5Bt^2+10Bt+4B+10p^2t^3+21p^2t^2\notag\\
&\qquad+15p^2u^2+53pt^2u+92ptu+8pu\notag\\
&\qquad +5t^4+27t^3+35t^2+24tu^2+8t+25u^2=0,\notag\\
E_6&:=120Apt+96Ap+24Au+10Bp^3t\notag\\
&\qquad+26Bp^2u+40Bpt^2+66Bpt+16Bp\notag\\
&\qquad +18Btu+18Bu+10p^3t^3+136p^2t^2u\notag\\
&\qquad +164p^2tu+40pt^4+196pt^3+204pt^2\notag\\
&\qquad+208ptu^2+16pt+172pu^2\notag\\
&\qquad+73t^3u+274t^2u+220tu+24u^3+16u=0\notag
\end{align}

Let 
\[
I:=\langle E_0,E_2,E_3,E_4,E_5,E_6\rangle \subset \mathbb{Q}[A,B,u,p,t].
\]
We calculate a Groebner basis of $I$. A Groebner basis computation with lexicographic order
$A \succ B \succ u \succ p \succ t$
shows that the reduced basis contains the polynomial
$pt^4$. 
Therefore, there exist $K_0,K_2,K_3,K_4,K_5,K_6 \in\mathbb{Q}[A,B,u,p,t]$ such that
\[
pt^4={K_0E_0+K_2E_2+K_3E_3+K_4E_4+K_5E_5+K_6E_6}.
\]
Since $E_0=E_2=E_3=E_4=E_5=E_6=0$, we have
\[
p=0
\qquad\text{or}\qquad
t=0.
\]
Here we remark that the explicit polynomials $K_i$ are omitted, since they are lengthy and
are not used elsewhere in the proof (They are available on the author's webpage).

\smallskip
\noindent
{\bf Case 1.} $t=0$ is impossible.

Suppose that $t=0$. Then \eqref{eq:E2} gives
\[
u=-Bp.
\]
If $p=0$, then \eqref{eq:E3} gives $3A+2B=0$, which contradicts $A>0$ and $B>0$. Hence $p\neq0$. Substituting $u=-Bp$ into \eqref{eq:E4}, we obtain
\[
8A+8B^2p^2-B^2=0,
\]
or equivalently
\[
A=B^2\left(\frac18-p^2\right).
\]
Thus $A>0$ implies $p^2<1/8$. On the other hand, substituting this into \eqref{eq:E3}, we obtain
\[
0=\frac38B^2+2B(1-p^2).
\]
The right-hand side is positive. This is a contradiction. Therefore $t=0$ is impossible.

\smallskip
\noindent
{\bf Case 2.} $p=0$ is also impossible.

Suppose that $p=0$. Then \eqref{eq:E2} gives
\[
(3t+2)u=0.
\]

\smallskip
\noindent
{\bf Case 2a.} $u=0$.

In this case, \eqref{eq:E0} gives
\[
A=(B+t+t^2)^2.
\]
Furthermore, \eqref{eq:E3} and \eqref{eq:E5} become
\[
3(B+t+t^2)^2+2B(t+1)+2t^3+7t^2+4t=0,
\]
\[
15(B+t+t^2)^2(t+1)+5Bt^2+10Bt+4B+5t^4+27t^3+35t^2+8t=0.
\]
Combining these equations, we obtain 
\[
B=-\frac{5t^4+18t^3+20t^2+12t}{5t^2+10t+6}
\]
and eliminating $B$ from the above two equations, we get
\[
12t^2(18t^4+73t^3+140t^2+120t+54)=0.
\]
However,
\[
18t^4+73t^3+140t^2+120t+54
=18\left(t^2+\frac{73}{36}t+\frac{120}{73}\right)^2
+\frac{2612159}{383688}t^2+\frac{2056752}{383688}>0.
\]
Thus $t=0$. This contradicts Case 1.

\smallskip
\noindent
{\bf Case 2b.} $3t+2=0$ and $u\neq0$.

Substituting $p=0$ and $t=-\frac{2}{3}$ into \eqref{eq:E4}, we obtain
\[
B=\frac{34}{9}.
\]
Moreover, \eqref{eq:E0} gives
\[
A=(B+t+t^2)^2=\left(\frac{32}{9}\right)^2.
\]
Substituting these into \eqref{eq:E3}, we obtain
\[
0=3A+\frac23B-\frac{4}{27}+3u^2
=\frac{1088}{27}+3u^2>0.
\]
This is a contradiction.

Therefore $p=0$ is also impossible. 

This contradiction shows that Case {\rm (VII)} cannot occur.
\end{proof}



\subsection{Final argument}\label{secmain}

\begin{proof}[Proof of Theorem~\ref{main}]
If ${\bf H}\neq0$, by Propositions~\ref{prop-caseA1}--\ref{prop-caseB4}, all possible non-minimal branches {\rm (I)}--{\rm (VII)} are excluded. Hence any simple rotational surface satisfying $\Delta\HH=0$ must satisfy $\HH=0.$
This proves that any biharmonic simple rotational surface in $\E^4$ is minimal.
\end{proof}

\noindent
{\bf Acknowledgements.}~\\
The author would like to express his gratitude to Jun-ichi Inoguchi and Cezar Oniciuc for their useful comments.
~\\

\section*{Statements and Declarations}
There is no conflict of interest in the manuscript.

\section*{Data availability statement} 
Data sharing not applicable to this article
as no datasets were generated or analysed during the current study.

\section*{Supplementary material}
Supplementary material related to the finite algebraic elimination is available at the author's webpage: 
\url{https://sites.google.com/site/shunmaetahomepage/supplementary-materials}

\bibliographystyle{amsbook}

\end{document}